\definecolor{ccolor}{RGB}{203,96,21}
\newcommand{\R}{\mathbb{R}} 
\newcommand{\K}{\mathbb{K}} 
\newcommand{\N}{\mathbb{N}}
\newcommand{\Lie}{\mathcal{L}}
\newcommand{\A}{\mathcal{A}}
\newcommand{\0}{\mathbf{0}}
\newcommand{\psd}{\mathbb{S}}
\newcommand{\bbmu}{\boldsymbol{\mu}}
\newcommand{\bell}{\boldsymbol{\ell}}
\DeclarePairedDelimiter{\norm}{\lVert}{\rVert}
\DeclarePairedDelimiter{\floor}{\lfloor}{\rfloor}
\DeclarePairedDelimiterX{\inp}[2]{\langle}{\rangle}{#1, #2}
\DeclarePairedDelimiter{\Mp}{\mathcal{M}_+(}{)}
\newtheorem{thm}{Theorem}
\newtheorem{lem}[thm]{Lemma}
\newtheorem{prop}[thm]{Proposition}
\newtheorem{prob}[thm]{Problem}
\newtheorem{cor}[thm]{Corollary}
\newtheorem{defn}[thm]{Definition}
\newtheorem{rmk}[thm]{Remark} 
\title{\LARGE \bf Peak Estimation of Rational Systems using Convex Optimization
}
\author{Jared Miller$^1$,  Roy S. Smith$^1$
\thanks{$^1$J. Miller and R. Smith are with the Automatic Control Laboratory (IfA), Department of Information Technology and Electrical Engineering (D-ITET), ETH Z\"{u}rich, Physikstrasse 3, 8092, Z\"{u}rich, Switzerland
(e-mail: \{jarmiller,  rsmith\}@control.ee.ethz.ch).
%Don't know if these are still valid
}
\thanks{J. Miller and R. Smith  were partially supported by the Swiss National Science Foundation under NCCR Automation, grant agreement 51NF40\_180545. }}
\begin{document}

\maketitle
\thispagestyle{empty}
\pagestyle{empty}

%%%%%%%%%%%%%%%%%%%%%%%%%%%%%%%%%%%%%%%%%%%%%%%%%%%%%%%%%%%%%%

\begin{abstract}
\label{sec:abstract}
This paper presents algorithms that upper-bound the peak value of a state function along trajectories of a continuous-time system with rational dynamics. The finite-dimensional but nonconvex  peak estimation problem is cast as a convex infinite-dimensional linear program in occupation measures. This infinite-dimensional program is then truncated into finite-dimensions using the moment-Sum-of-Squares (SOS) hierarchy of semidefinite programs. Prior work on treating rational dynamics using the moment-SOS approach involves clearing dynamics to common denominators or adding lifting variables to handle reciprocal terms under new equality constraints. Our solution method uses a sum-of-rational method based on absolute continuity of measures. The Moment-SOS truncations of our program possess lower computational complexity and (empirically demonstrated) higher accuracy of upper bounds on example systems as compared to prior approaches.

% empirically demonstrates increased accuracy and decreased computational
% Examples of such rational systems include chemical reaction networks with Michaelis-Menten kinetics, rigid body kinematics, and network queuing models. 
% \urg{Abstract goes here}

% \urg{Destination: \href{https://ecc24.euca-ecc.org/call-for-papers/}{European Control Conference} (Deadline Oct 25). 8 pages for review, 6 pages for final version.

% }

\end{abstract}
\section{Introduction}
\label{sec:introduction}

Peak estimation is the practice of finding extreme values of a state function $p$ along trajectories $x(t)$ of a dynamical system that evolve starting from an initial set $X_0$. Instances of peak estimation (extremizing $p(x(t))$) include finding the maximum speed of an aircraft, height of a rocket, concentration of a chemical, and current along a transmission line. This work focuses on peak estimation in the case of rational continuous-time dynamics for a state $x \in \R^n$ where:
\begin{align}
    \label{eq:rational_dynamics}
    \dot{x}(t) = f(t, x),
    \end{align}
    \noindent where the rational dynamics $f$ can be represented as 
    \begin{align}
    f(t, x) &= f_0(t, x) + \sum_{\ell=1}^L \frac{N_\ell(t, x)}{D_\ell(t, x)}.\label{eq:rational_dynamics_eq}
\end{align}
The expression in \eqref{eq:rational_dynamics} is a sum-of-rational dynamical system in terms of polynomials $f_0$, $N_\ell,$ and $D_\ell$ (with $L$ finite). Applications of peak estimation for rational systems include systems include finding maximal concentrations in  chemical reaction networks with Michaelis-Menten kinetics or yeast glycolysis, velocities in rigid body kinematics (manipulator equation with rational friction models), and occupancies in network queuing models \cite{nvemcova2018towards, klipp2005systems}. Refer to \cite{nvemcova2018towards} for a detailed survey of applications of rational systems, as well as a formulation of algebraic analysis techniques to establish system properties  such as parameter identifiability and controllability. 
% We will further assume w.l.o.g. that all rational terms in \eqref{eq:rational_dynamics} are proper, given that improper terms can be reduced with their quotients absorbed into $f_0$.

The rational-dynamics peak estimation task considered in this work (maximizing a state function $p$ along system trajectories $x(t \mid x_0)$ evolving in a state space $X \in \R^n$ starting from $X_0 \subseteq X$ with a time horizon of $[0, T]$) is described in Problem \ref{prob:peak_traj}.
% Find the supremal value of a state function $p(x)$ for trajectories
% The peak estimation problem for 
\begin{prob}
\label{prob:peak_traj}
Find an initial condition $x_0$ and a stopping time $t^*$ to extremize:
\begin{subequations}
\label{eq:peak_traj}
    \begin{align}
    P^* = & \ \sup_{t^*, x_0} \ p(x(t^* \mid x_0)) &\\
   \textrm{subject to} \quad & \dot{x}(t) = f(t, x(t))  \qquad \textrm{from \eqref{eq:rational_dynamics}}  \label{eq:peak_traj_traj}\\
    & t^* \in [0, T], \quad x_0 \in X_0.
    \end{align}
\end{subequations}
\end{prob}
The \ac{ODE} peak estimation problem in \eqref{eq:peak_traj} is an instance of \iac{OCP} with a free terminal time and zero stage (integral) cost. The finite-dimensional problem \eqref{eq:peak_traj} is generically nonconvex in $(t^*, x_0)$, but can be lifted into a pair of primal-dual infinite-dimensional \acp{LP} in occupation measures \cite{lewis1980relaxation}. Computational solution methods for derived measure \acp{LP} include gridding-based discretization \cite{cho2002linear}, random sampling \cite{mohajerin2018infinite}, and the moment-\ac{SOS} hierarchy of \acp{SDP} \cite{lasserre2001global, henrion2008nonlinear, fantuzzi2020bounding}. Peak estimation \acp{LP} have been developed for dynamical systems such as robustly uncertain systems \cite{miller2021uncertain, miller2023robustcounterpart}, stochastic systems (mean and value-at-risk) \cite{cho2002linear, miller2023chancepeak},  time-delay systems \cite{miller2023delay}, and hybrid systems \cite{miller2023hybrid}. 
Other problem domains in which infinite-dimensional \acp{LP} have been used in the analysis and control of dynamical systems include reachable set estimation and backwards-reachable-set maximizing control \cite{Henrion_2014, majumdar2014convex, korda2013inner, kariotoglou2013approximate, schmid2022probabilistic}, maximum positively invariant set estimation \cite{oustry2019inner}, maximum controlled invariant sets \cite{korda2014convex}, global attractors \cite{goluskin2020attractor, schlosser2021converging}, and long-time averages \cite{tobasco2018optimal}. 

All of the previously mentioned applications of \ac{LP} in dynamical systems analysis and control (in the context of continuous state spaces) use a Lipschitz assumption in dynamics in order to prove that there is no relaxation gap between the infinite-dimensional \ac{LP} and the original finite-dimensional nonconvex program. The rational dynamics in \eqref{eq:rational_dynamics} may fail to be globally Lipschitz (within the domain of $[0, T] \times X$), and therefore falls into the theory of nonsmooth dynamical systems \cite{ambrosio2008transport, ambrosio2014continuity, souaiby2023ensemble}. This work utilizes a sum-of-rational representation from \cite{bugarin2016minimizing} in order to cast \eqref{eq:peak_traj} as an \ac{LP} in measures, and uses the theory of nonsmooth Liouville equations from \cite{ambrosio2008transport} to prove equivalence of optima under compactness, trajectory-uniqueness, and positivity assumptions. Prior work in \ac{SOS}-based analysis of rational functions includes clearing to common denominators \cite{parker2021study} (under positivity), and adding new variables to represent the graph of rational functions as equality constraints \cite{magron2018semidefinite, newton2023rational}.

% and state inflation \urg{get the papers, I remember seeing that Victor and Tianyu had one}.

% \old{\cite{Henrion_2014, majumdar2014convex} (from outside) and \cite{korda2013inner} (from inside), maximum positively invariant sets \cite{oustry2019inner}, maximum controlled invariant sets \cite{korda2014convex}, and global attractors \cite{schlosser2021converging}. }

% Measure \ac{LP} formulations may be develope

% Other applications of \ac{LP} techniques to the analysis and control of dynamical systems includes stochastic analysis \cite{}

% \urg{Introduction goes here. }

% \urg{Literature review here.}

The main contributions of this work are:
\begin{itemize}
    \item A measure \ac{LP} formulation for peak estimation of rational dynamical systems based on the theory of \cite{bugarin2016minimizing} (for sum-of-rational static optimization).
    \item A proof that there is no relaxation gap between the finite-dimensional nonconvex problem \eqref{eq:peak_traj} and the objectives of infinite-dimensional \acp{LP}
    \item A proof of strong duality between the rational-system peak estimation infinite-dimensional \acp{LP}.
    \item Quantification of the computational complexity in terms of sizes of the \ac{PSD} matrices in \acp{SDP}.
    \item Experiments demonstrating the upper-bounding of the true peak on rational dynamical systems.
\end{itemize}

This paper has the following structure: 
Section \ref{sec:preliminaries} reviews preliminaries such as notation and occupation measures. Section \ref{sec:rat_lp} formulates a sum-of-rational-based measure \ac{LP} for the peak estimation problem in \eqref{eq:peak_traj}. Section \ref{sec:rat_sdp} introduces and applies the moment-\ac{SOS} hierarchy to obtain a nonincreasing sequence of upper-bounds to the true peak value $P^*$. Section \ref{sec:examples} performs peak estimation on example rational dynamical systems. Section \ref{sec:conclusion} concludes the paper. Appendix \ref{app:duality} provides a proof of strong duality between the measure and function \acp{LP}. Appendix \ref{app:other_methods} documents \ac{SOS} programs for pre-existing rational-peak-estimation methods \cite{parker2021study, magron2018semidefinite}.
% \urg{Fill in the paper structure}
% Section \ref{sec:preliminaries} will review preliminaries such as notation, notions of stability for linear systems, and \ac{SOS} proofs of polynomial nonnegativity. Section \ref{sec:full_method} will present 
% The paper is concluded in Section \ref{sec:conclusion}.
\section{Preliminaries}
\label{sec:preliminaries}

% \subsection{Acronyms/Initialisms}
\begin{acronym}[WSOS]
\acro{BSA}{Basic Semialgebraic}
% \acro{DDC}{Data Driven Control}

% \acro{GAS}{Globally Asymptotically Stable}

% \acro{CSP}{Correlative Sparsity Pattern}

% \acro{LMI}{Linear Matrix Inequality}
% \acroplural{LMI}[LMIs]{Linear Matrix Inequalities}
% \acroindefinite{LMI}{an}{a}

% \acro{LQR}{Linear Quadratic Regulator}
% \acroplural{LMI}[LMIs]{Linear Matrix Inequalities}
% \acroindefinite{LQR}{an}{a}

\acro{LP}{Linear Program}
\acroindefinite{LP}{an}{a}

\acro{OCP}{Optimal Control Problem}
\acroindefinite{OCP}{an}{a}
%\acroIndefinite{OCP}{an}{a}

\acro{ODE}{Ordinary Differential Equation}
\acroindefinite{ODE}{an}{a}
% \acro{POP}{Polynomial Optimization Problem}

\acro{PSD}{Positive Semidefinite}

% \acro{PD}{Positive Definite}

% \acro{PDE}{Partial Differential Equation}

\acro{SDP}{Semidefinite Program}
\acroindefinite{SDP}{an}{a}

\acro{SOS}{Sum of Squares}
\acroindefinite{SOS}{an}{a}

\acro{WSOS}{Weighted Sum of Squares}

\end{acronym}

\subsection{Notation}
The set of $n$-dimensional indices with sum less than or equal to a value $d$ is $\N^n_{\leq d}$ ($\alpha \in \N_{\leq d}^n$ if $\alpha \in \N^n$ and $\sum_{i=1}^n \alpha_i \leq d$). The set of polynomials with indeterminate $x$ is $\R[x]$, and the subset of polynomials with degree at most $d$ is $\R[x]_{\leq d}$.

The set of continuous (continuous and nonnegative) functions over $S$ is $C(S)$ ($C_+(S)$). The set of signed (nonnegative) Borel measures over a set $S$ is $\mathcal{M}(S)$ ($\mathcal{M}_+(S)$). The measure of a set $A \subseteq S$ w.r.t. $\mu \in \Mp{S}$ is $\mu(A)$.
The sets $C_+(S)$ and $\mathcal{M}_+(S)$ possess a bilinear pairing $\inp{\cdot}{\cdot}$ that acts by Lebesgue integration: $g \in C_+(S), \mu \in \Mp{S}: \inp{g}{\mu} = \int_S g(s) d \mu(s)$. 
This bilinear pairing is an inner product between $C_+(S)$ and $\Mp{S}$ when $S$ is compact (in which $\Mp{S}$ can be canonically identified as the dual of $C_+(S)$), and the pairing can be extended to integration between $C(S)$ (and sets of more general measurable functions) and $\mathcal{M}(S)$. Given two measures $\mu_1 \in \Mp{S_1}, \mu_2 \in \Mp{S_2}$, the product measure $\mu_1 \otimes \mu_2$ is the unique measure satisfying $\forall A_1 \subseteq S_1, \ A_2 \subseteq S_2: \ (\mu_1 \otimes \mu_2)(A_1 \times A_2) = \mu_1(A_1) \mu_2(A_2)$.
Given a set $A \subseteq S$, the $0/1$ indicator function $I_A$ takes on value $I_A(s)=1$ if $s \in A$ and $I_A(s)=0$ if $s \notin A$ (ensuring that $\inp{I_A(S)}{\mu} = \mu(A)$). The mass of a measure $\mu \in \Mp{S}$ is $\mu(S) = \inp{1}{S}$, and $\mu$ is a probability measure if this mass is 1. The Dirac delta supported at a point $s'$ ($\delta_{s=s'}$) is the unique probability measure such that $\forall g \in C(\mathcal{S}): \inp{g}{\delta_{s=s'}} = g(s')$. The adjoint of a linear operator $\mathscr{L}$ is $\mathscr{L}^\dagger$.
% \urg{Put this notation in text}

% \urg{Fill in the notation}
% \begin{tabular}{p{0.15\columnwidth}p{0.75 \columnwidth}}
%     $\R$ & Set of real numbers \\
%     $\N$ & Set of natural numbers\\
%     $\R^n$ & $n$-dimensional real Euclidean vector space \\
%     $\N^n$ & $n$-dimensional multi-index space \\
%     $\N^n_{\leq d}$ & index set $\{\alpha \in \N \mid \ \sum_{i=1}^n \alpha_i \leq d\}$ \\
%     $a..b$ & subset of $\N$ between $a$ and $b$ (inclusive) \\
%     $\R[x]$ & polynomial ring with indeterminate $x$ \\
%     $\deg(h)$ & polynomial degree of $h \in \R[x]$ \\
%     $\R[x]_{\leq d}$ & set  $\{h \in \R[x] \mid \ \deg(h) \leq d\}$ \\
%     $\mathcal{M}(S)$ & signed Borel measures over $S$ \\
%     $\Mp{S}$ & nonnegative Borel measures over $S$ \\
%     $C(S)$ & continuous  functions over $S$ \\
%     $C_+({S})$ & continuous and nonnegative  functions over $S$ \\
%     $C^1(S)$ & continuously-differentiable  functions over $S$ \\
%     $I_A$ & 0/1 indicator function on $A \subseteq S$ \\
%     $\mu \otimes \nu$ & Product measure between $\mu, \nu$\\
%     % $\mu \in \Mp{S_1}, \nu \in \Mp{S_2}$\\
%     $\inp{f}{\mu}$ & Lebesgue integral $\int_S f(s) d \mu(s)$ \\    
%     $\inp{1}{\mu}$ & mass of $\mu \in \Mp{S}$ \\
%     $\delta_{x=x'}$ & Dirac delta at $x'$: $\forall f, \inp{f(x)}{\delta_x'} = f(x')$ \\
%     $\succeq 0$ & \ac{PSD} comparator
% \end{tabular}

% The space of real numbers  $n$-dimensional real Euclidean vector space is $\R^n$

\subsection{Occupation Measures}

An \textit{occupation measure} is a nonnegative Borel measure that contains all possible information about the behavior of (a set of) trajectories of a given dynamical system.

For a given initial condition $x_0 \in X_0$, the occupation measure $\mu_{x(\cdot)} \in \Mp{[0, T] \times X}$ of the trajectory $x(t \mid x_0)$ \eqref{eq:peak_traj_traj} up to a stopping time $t^* \in [0, T]$ satisfies $\forall A \in [0, T], \ B \in X$:
\begin{align}
    \label{eq:single_free_occ}
    &\mu_{x(\cdot)}(A \times B \mid t^*) =
    & \int_{[0, t^*]} I_{A \times B}\left((t, x(t \mid x_0)\right) dt. 
\end{align}
The $(t^*, x_0^*)$-occupation measure $\mu_{x(\cdot)}$ in \eqref{eq:single_free_occ} can also be understood in terms of its pairing with arbitrary continuous (measurable) functions:
\begin{align}
    \forall \omega \in C([0, T] \times X)& & \inp{v}{\mu_{x(\cdot)}} & = \textstyle \int_{0}^{t^*} \omega(t, x(t \mid x_0)) dt.
\end{align}

Occupation measures $\mu_{x(\cdot)}$ in \eqref{eq:single_free_occ} may be defined over a distribution of initial conditions $\mu_0 \in \Mp{X_0}$ (with $x_0 \sim \mu_0$):
\begin{align}
    \label{eq:avg_free_occ}
    &\mu(A \times B \mid t^*) =
    & \int_{X_0} \int_{[0, t^*]} I_{A \times B}\left((t, x(t \mid x_0)\right) dt d \mu_0(x_0). \nonumber
\end{align}

The Lie derivative (instantaneous change) of a test function $v \in C^1([0, T] \times X)$ w.r.t. dynamics \eqref{eq:rational_dynamics} is
\begin{equation}
\label{eq:lie}
    \Lie_f v(t, x) = \partial_t v(t,x) + f(t,x) \cdot \nabla_x v(t,x).
\end{equation}

Any trajectory of \eqref{eq:peak_traj_traj} satisfies the conservation law,
\begin{align}
    v(t^*, x(t \mid x_0)) = v(0, x_0) + \int_{0}^{t^*} \Lie_f v(t', x(t' \mid x_0)) dt'. \label{eq:conservation}
\end{align}

The conservation law in \eqref{eq:conservation} is a Liouville equation, and can be written in terms of the initial measure $\mu_0 = \delta_{x=x_0} \in \Mp{X_0}$, terminal measure $\mu_p = \delta_{t=t^*, x=x(t^* \mid x_0)} \in \Mp{[0, T] \times X}$, and occupation measure $\mu = \mu_{x(\cdot)} \in \Mp{[0, T] \times X}$ for all $v$ as \cite{ambrosio2008transport}
\begin{align}
    \inp{v(t, x)}{\mu_p} &= \inp{v(0, x)}{\mu_0} + \inp{\Lie_f v(t, x)}{\mu}. \label{eq:liou_strong} \\ \intertext{The $\forall v$ imposition in equation \eqref{eq:liou_strong} can be written equivalently in shorthand form (with $\Lie_f^\dagger$ as the adjoint linear operator of $\Lie$) as} 
    \mu_p &= \delta_0 \otimes \mu_0 + \Lie_f^\dagger \mu. \label{eq:liou_weak}
\end{align}

Any triple $(\mu_0, \mu_p, \mu)$ that satisfies \eqref{eq:liou_weak} is a \textit{relaxed occupation measure}; the class of relaxed occupation measures may be larger than the set of superpositions (distributions) of occupation measures arising from trajectories.

% For a set 

% A superposition of trajectories
% A set (superposition) of trajectories $x_0$

% \urg{Hurwitz, Quadratic Stability, Superstability. Link superstability with the Gershgorin Circle Criterion}

% Scherer Psatz for Matrices \cite{scherer2006matrix}
\section{Rational Linear Program}
\label{sec:rat_lp}

This section will present convex infinite-dimensional \ac{LP} to perform peak estimation of rational systems.

\subsection{Assumptions}
% \urg{The main theory}

% We will posit the following assumptions over the course of the papers. 

We will begin with the following assumption:
\begin{itemize}
\item[A1:] If a trajectory satisfies $x(t \mid x_0) \not\in X$ for some $x_0 \in X_0$, then $x(t' \mid x_0) \not\in X$ for all $t' \geq t$.    
\end{itemize}
Further assumptions will be added as needed. 
\begin{rmk}
Assumption A1 is a non-return assumption in the style of \cite{miller2023distance}.
\end{rmk}

\subsection{Measure Program}

Problem \ref{prob:peak_meas} introduces \iac{LP} in measures to produce an upper-bound on Problem \ref{prob:peak_traj} \cite{lewis1980relaxation, cho2002linear}:
% The following \ac{LP} in measures will produce an upper-bound on \eqref{eq:peak_traj} \cite{lewis1980relaxation, cho2002linear}:
\begin{prob}
\label{prob:peak_meas}
Find an initial measure $\mu_0$, a relaxed occupation measure $\mu$, and a peak measure $\mu_p$ to supremize
\begin{subequations}
\label{eq:peak_meas}
\begin{align}
p^* = & \ \sup \quad \inp{p}{\mu_p} \label{eq:peak_meas_obj} \\
    \text{subject to:} \quad & \mu_p = \delta_0 \otimes\mu_0 + \Lie_f^\dagger \mu \label{eq:peak_meas_flow}\\
    & \inp{1}{\mu_0} = 1 \label{eq:peak_meas_prob}\\
    & \mu, \mu_p \in \Mp{[0, T] \times X} \label{eq:peak_meas_peak}\\
    & \mu_0 \in \Mp{X_0}. \label{eq:peak_meas_init}
\end{align}
\end{subequations}
\end{prob}

\begin{thm}
\label{eq:no_relaxation_upper_bound}
    Under Assumption A1, Problem \ref{prob:peak_meas} will upper-bound \ref{prob:peak_traj} with $p^* \geq P^*$.
\end{thm}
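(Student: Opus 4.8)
The plan is to prove the relaxation inequality $p^* \geq P^*$ in the classical way: take an arbitrary feasible point of Problem \ref{prob:peak_traj}, build from its trajectory a triple of measures $(\mu_0, \mu, \mu_p)$ that is feasible for Problem \ref{prob:peak_meas} and attains the same objective value, and then pass to the supremum. Only this direction is claimed here, so no compactness or trajectory-uniqueness hypotheses are needed (those enter later for the reverse inequality and for strong duality); Assumption A1 is the sole additional ingredient.

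Fix a feasible pair $(t^*, x_0)$ for Problem \ref{prob:peak_traj}: $x_0 \in X_0 \subseteq X$, $t^* \in [0,T]$, and $x(\cdot \mid x_0)$ solves \eqref{eq:peak_traj_traj} with $x(t^* \mid x_0) \in X$. The first step is to invoke Assumption A1 in contrapositive form: were $x(t \mid x_0) \notin X$ for some $t \leq t^*$, then A1 would force $x(t^* \mid x_0) \notin X$, a contradiction. Hence the entire prefix satisfies $x(t \mid x_0) \in X$ for all $t \in [0, t^*]$, so the trajectory occupation measure $\mu_{x(\cdot)}$ of \eqref{eq:single_free_occ} is a genuine element of $\Mp{[0,T] \times X}$, with total mass $t^* \leq T < \infty$.

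Next I would set $\mu_0 = \delta_{x = x_0} \in \Mp{X_0}$, $\mu_p = \delta_{t = t^*,\, x = x(t^* \mid x_0)} \in \Mp{[0,T] \times X}$, and $\mu = \mu_{x(\cdot)}$. Constraints \eqref{eq:peak_meas_prob}, \eqref{eq:peak_meas_peak}, \eqref{eq:peak_meas_init} hold because $\mu_0$ is a probability measure supported in $X_0$ and $\mu, \mu_p$ are nonnegative and supported in $[0,T] \times X$ by the previous paragraph. For the Liouville constraint \eqref{eq:peak_meas_flow} I would apply the conservation law \eqref{eq:conservation}: for every $v \in C^1([0,T] \times X)$ the ODE solution is absolutely continuous on $[0,t^*]$ and the chain rule gives $v(t^*, x(t^* \mid x_0)) = v(0, x_0) + \int_0^{t^*} \Lie_f v(t', x(t' \mid x_0))\,dt'$. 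Rewriting each term through the measures just defined yields $\inp{v}{\mu_p} = \inp{v}{\delta_0 \otimes \mu_0} + \inp{\Lie_f v}{\mu}$ for all such $v$, which is exactly \eqref{eq:liou_strong}--\eqref{eq:liou_weak}, i.e. \eqref{eq:peak_meas_flow} (a single-Dirac superposition is of course admissible, since the relaxed class is only larger). Finally $\inp{p}{\mu_p} = p(x(t^* \mid x_0))$ by the sifting property of the Dirac delta, so $(\mu_0, \mu, \mu_p)$ is feasible for Problem \ref{prob:peak_meas} with objective equal to that of the chosen trajectory. As $(t^*, x_0)$ was arbitrary, taking the supremum gives $p^* \geq P^*$ (vacuously so if Problem \ref{prob:peak_traj} is infeasible).

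The step requiring care, and the one I would treat as the main obstacle, is the rigorous validity of the conservation law \eqref{eq:conservation} for rational dynamics that need not be globally Lipschitz on $[0,T] \times X$: one must verify that $t' \mapsto \Lie_f v(t', x(t' \mid x_0))$ is integrable on $[0, t^*]$ so that the fundamental theorem of calculus applies along the absolutely continuous solution. This is where the nonsmooth-Liouville framework of \cite{ambrosio2008transport} is used; since a genuine solution of \eqref{eq:peak_traj_traj} only visits points where $f$ is finite, and $\nabla_x v$ is bounded on the relevant compact time--state region, the integrand is bounded on $[0,t^*]$ and the law holds. Everything else is the standard peak-estimation relaxation argument.
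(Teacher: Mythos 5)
Your proof is correct and takes essentially the same approach as the paper: from an arbitrary feasible $(t^*, x_0)$ you build the Dirac initial/peak measures and the trajectory occupation measure, verify the Liouville and support constraints, match the objective, and pass to the supremum. You are slightly more explicit than the paper about the contrapositive role of A1 (guaranteeing the whole prefix stays in $X$) and about the integrability of $\Lie_f v$ along the trajectory, but these are refinements of the same argument rather than a different route.
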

\begin{proof}
    Let $(t^*, x_0^*) \in [0, T] \times X_0$ be a feasible point of \eqref{eq:peak_traj}, such that $\forall t \in [0, t^*]: \ x(t \mid x_0^*) \in X$. One such feasible point is the tuple $(0, x^*_0)$ for any $x^*_0 \in X_0$. A feasible relaxed occupation measure $(\mu_0, \mu_p, \mu)$ may be constructed from the trajectory $x(t \mid x_0^*)$: with an initial measure $\mu_0 = \delta_{x=x_0^*},$ a peak measure $\mu_p = \delta_{t=t^*, \ x=x(t \mid x_0^*)},$ and an occupation measure of $\mu= \mu_{x(\cdot)}$. This relaxed occupation measure satisfies constraints \eqref{eq:peak_meas_flow}-\eqref{eq:peak_meas_init} and has objective $\inp{p}{\mu_p} = p(x(t^* \mid x_0^*)).$ The upper-bound $p^* \geq P^*$ is proven because every $(t^*, x_0^*)$ has a measure representation.
\end{proof}

Equality of the objectives of Problem \ref{prob:peak_traj} and \ref{prob:peak_meas} will occur under a set of additional assumptions:

\begin{itemize}
    \item[A2:] The set $[0, T] \times X_0 \times X$ is compact.
    \item[A3:] The cost $p(x)$ is continuous.
    \item[A4:] Trajectories of \eqref{eq:peak_traj_traj} starting at $X_0$ in times $[0, T]$ are unique.
\end{itemize}

\begin{thm}
\label{eq:no_relaxation}
    Under assumptions A1-A4, the relation $p^*=P^*$ will hold.
\end{thm}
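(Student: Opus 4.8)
The plan is to prove the reverse inequality $p^* \le P^*$, which combined with Theorem~\ref{eq:no_relaxation_upper_bound} gives $p^*=P^*$. It suffices to show that \emph{every} triple $(\mu_0,\mu_p,\mu)$ feasible for Problem~\ref{prob:peak_meas} satisfies $\inp{p}{\mu_p}\le P^*$. First I would record two mass identities obtained by testing the Liouville constraint \eqref{eq:peak_meas_flow} against $v\equiv 1$ and $v(t,x)=t$: since $\Lie_f 1 = 0$ we get $\inp{1}{\mu_p}=\inp{1}{\mu_0}=1$, so $\mu_p$ is a probability measure; and since $\Lie_f t = 1$ while $\mu_0$ is supported at $t=0$, we get $\mu([0,T]\times X)=\inp{t}{\mu_p}\le T$, so $\mu$ has finite mass. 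Hence all three measures live on the compact set of A2 and have bounded mass.

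The core step is a trajectory-recovery (superposition) argument applied to the Liouville equation \eqref{eq:liou_weak}. Under A2 the dynamics $f$ is continuous and bounded on $[0,T]\times X$ (this is where a positivity/non-vanishing hypothesis on the denominators $D_\ell$ over $X$ is used, so that $\Lie_f v$ pairs meaningfully against $\mu$), and \eqref{eq:liou_weak} is then a nonsmooth continuity/Liouville equation in the sense of \cite{ambrosio2008transport}. The superposition principle represents $(\mu_0,\mu_p,\mu)$ as an average over arcs of \eqref{eq:peak_traj_traj}: there is a probability measure over trajectory arcs, each starting in $X_0$ and run up to a terminal time in $[0,T]$, whose starting points, stopped endpoints, and occupation measures pushforward to $\mu_0$, $\mu_p$, and $\mu$ respectively. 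Invoking uniqueness A4, every such arc coincides with the genuine trajectory $x(\cdot\mid x_0)$ from its starting point, so the representation collapses to a measurable stopping-time selection $x_0 \mapsto \tau(x_0) \in [0,T]$ with $\mu_p = \int_{X_0} \delta_{t=\tau(x_0),\, x = x(\tau(x_0)\mid x_0)} \, d\mu_0(x_0)$.

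It then remains to check that each recovered pair $(\tau(x_0),x_0)$ is feasible for Problem~\ref{prob:peak_traj}, i.e.\ $x(t\mid x_0)\in X$ for all $t\in[0,\tau(x_0)]$. Since $\mu$ (hence each arc's occupation measure) is supported in $[0,T]\times X$, each arc lies in $X$ on the time set it charges; A1 forbids re-entry into $X$, so the in-$X$ portion of $x(\cdot\mid x_0)$ is a single initial segment, and since the stopped endpoint $x(\tau(x_0)\mid x_0)$ belongs to the closed set $X$, the whole segment $x([0,\tau(x_0)]\mid x_0)$ lies in $X$. Using continuity of $p$ (A3) to integrate, $\inp{p}{\mu_p}=\int_{X_0} p\bigl(x(\tau(x_0)\mid x_0)\bigr)\,d\mu_0(x_0) \le P^*\inp{1}{\mu_0}=P^*$, and taking the supremum over feasible triples yields $p^* \le P^*$.

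I expect the superposition step to be the main obstacle: the rational dynamics \eqref{eq:rational_dynamics} need not be globally Lipschitz, so the classical smooth superposition principle does not apply directly, and one must lean on the nonsmooth transport theory of \cite{ambrosio2008transport} together with A4 to guarantee that the relaxed occupation measure is carried by genuine trajectories rather than by branching or diffuse solutions (and also that $f$ is well behaved on $X$ via positivity of the $D_\ell$). A secondary technical point is the free terminal time: the arc space must be enlarged with a stopping-time coordinate, as is standard in the peak-estimation literature, and one must verify that A1 makes the induced stopping set consistent with the implicit state constraint in Problem~\ref{prob:peak_traj}.
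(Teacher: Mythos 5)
Your proposal follows essentially the same route as the paper: invoke the nonsmooth superposition principle of Ambrosio (the paper cites Theorem 3.1 of \cite{ambrosio2014continuity}; you cite \cite{ambrosio2008transport}) together with uniqueness (A4) to represent any relaxed occupation measure as carried by genuine trajectories, then use compactness (A2), continuity of $p$ (A3), and the non-return property (A1) to conclude $\inp{p}{\mu_p}\le P^*$, giving equality with Theorem~\ref{eq:no_relaxation_upper_bound}. One thing you flag that the paper's two-line proof glosses over is that the superposition theorem needs $f$ to be bounded/integrable on $[0,T]\times X$, which for rational dynamics is not automatic under A1--A4 alone and implicitly leans on positivity of the denominators (A5); that is a fair caveat about the stated hypotheses, not a flaw in your argument.
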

\begin{proof}
    By Theorem 3.1 of \cite{ambrosio2014continuity}, imposition of assumption A4 ensures that every relaxed occupation measure $(\mu_0, \mu_p, \mu)$ is supported on the graph of (a superposition of) trajectories of \eqref{eq:peak_traj_traj}. Compactness (A2) and (lower semi-) continuity (A3) are necessary to invoke arguments used by Theorem 2.1 of \cite{lewis1980relaxation}, using the non-smooth Theorem 3.1 of \cite{ambrosio2014continuity} rather than a Lipschitz assumption on dynamics.
\end{proof}

\subsection{Absolute Continuity Formulation}

We will use the sum-of-rationals framework of \cite{bugarin2016minimizing} in order to express \eqref{eq:peak_meas} in a form more amenable to numerical computation, using the moment-\ac{SOS} hierarchy of \acp{SDP}. This sum-of-rationals framework uses the notion of absolute continuity of measures.

\begin{defn}
\label{defn:abscont}
    A measure $\nu \in \Mp{S}$ is \textit{absolutely continuous} to $\mu\in \Mp{S}$ ($\nu \ll \mu$) if,
    for every $A \subseteq X$, $\inp{I_A}{\mu} = 0$ implies that $\inp{I_A}{\nu} = 0$.    
\end{defn}
\begin{defn}{Radon-Nikodym derivative}
    For every pair of absolutely continuous measures $\nu \ll \mu$, there exists a nonnegative function $h(s)$ such that $\forall g \in C(S): \ \inp{g(s)}{\nu(s)} = \inp{g(s) h(s)}{\mu(s)}$.
    This function $h$ is also referred to as the \textit{density} of $\nu$ w.r.t. $\mu$, or as the \textit{Radon-Nikodym} derivative $\frac{d \nu}{d \mu}$.
\end{defn}

Given a dynamics $(f_0, {N_\ell}_{\ell=1}^L, {D_\ell}_{\ell=1}^L)$ 
in \eqref{eq:rational_dynamics} and a relaxed occupation measure $(\mu_0, \mu_p, \mu)$ feasible for \eqref{eq:peak_meas}, we can define a set of per-rational measures $\{\nu_\ell\}_{\ell=1}^L$  (with $\forall \ell: \nu_\ell \in \Mp{[0, T] \times X}$). These per-rational measures will be constructed to satisfy the following condition with respect to the rational denominators in \eqref{eq:rational_dynamics_eq}
\begin{align}
    & \forall \omega \in C([0, T] \times X), \ell \in 1..L: \nonumber \\
    & \qquad \inp{\omega(t, x) D_\ell(t, x)}{\nu_\ell(t, x)} = \inp{\omega(t, x)}{\mu(t, x)}.\label{eq:abscont_per}
    \intertext{This condition will be expressed in condensed notation as}
    &\forall \ell: \ D_\ell^\dagger \nu_\ell = \mu.
\end{align}

\begin{rmk}
    Equation \eqref{eq:abscont_per} is inspired by Equation (7) of \cite{bugarin2016minimizing} for sum-of-rational optimization.
\end{rmk}

We now impose the following assumption:
\begin{itemize}    
    \item[A5:] Each function $D_\ell$ is strictly positive over $[0, T] \times X$.
\end{itemize}

\begin{prop}
    The measures $\nu_\ell$ have finite densities $\frac{d \nu_\ell}{d \mu} = 1/D_\ell$ when A5 is in effect. \label{prop:density_pos}
\end{prop}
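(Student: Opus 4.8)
The plan is to exhibit $1/D_\ell$ directly as the Radon--Nikodym derivative by choosing a suitable test function in \eqref{eq:abscont_per}, the key point being that strict positivity of $D_\ell$ together with compactness makes $1/D_\ell$ a bounded continuous function. First I would record the joint consequence of A2 and A5: since $[0,T]\times X$ is compact and $D_\ell$ is a continuous (polynomial) function that is strictly positive there, it attains a strictly positive minimum $d_\ell := \min_{(t,x)\in[0,T]\times X} D_\ell(t,x) > 0$. Hence $1/D_\ell$ is well-defined, continuous, and bounded by $1/d_\ell$ on $[0,T]\times X$.

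Next, fix an arbitrary $g \in C([0,T]\times X)$ and set $\omega := g/D_\ell$, which by the previous step is again an element of $C([0,T]\times X)$ and is therefore admissible in \eqref{eq:abscont_per}. Using $\omega D_\ell = g$ then gives $\inp{g}{\nu_\ell} = \inp{\omega D_\ell}{\nu_\ell} = \inp{\omega}{\mu} = \inp{g/D_\ell}{\mu}$. Thus $\nu_\ell$ and the measure with density $1/D_\ell$ with respect to $\mu$ pair identically with every continuous function; since a finite Borel measure on the compact metric space $[0,T]\times X$ is determined by its pairing with $C([0,T]\times X)$, the two measures coincide. In particular $\nu_\ell \ll \mu$, its Radon--Nikodym derivative is $\frac{d\nu_\ell}{d\mu} = 1/D_\ell$, this density is bounded above by $1/d_\ell < \infty$ (so it is finite), and $\nu_\ell$ has finite mass $\inp{1}{\nu_\ell} = \inp{1/D_\ell}{\mu} \le \inp{1}{\mu}/d_\ell$.

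The only subtle point — and the step I would flag as the crux — is the admissibility of $\omega = g/D_\ell$ as a continuous test function in \eqref{eq:abscont_per}: pointwise strict positivity of $D_\ell$ alone (A5) only guarantees $1/D_\ell$ is finite at each point, whereas it is the compactness in A2 that upgrades this to the uniform lower bound $d_\ell > 0$ and hence to boundedness and continuity of $1/D_\ell$ on the whole domain. Without compactness, $D_\ell$ could decay to zero toward the boundary of $X$, $1/D_\ell$ would be unbounded, and the density could fail to be finite. (If one also wants existence rather than merely a characterization, the same identity read in reverse shows that $\nu_\ell$ defined by $\inp{g}{\nu_\ell} := \inp{g/D_\ell}{\mu}$ is a bona fide nonnegative finite measure satisfying $D_\ell^\dagger \nu_\ell = \mu$.)
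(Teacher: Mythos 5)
Your proof is correct, and it differs from the paper's treatment in a useful way: the paper does not prove Proposition~\ref{prop:density_pos} directly, but rather defers it to Theorem~2.1 of \cite{bugarin2016minimizing} (see the parenthetical in the proof of Corollary~\ref{eq:no_relaxation_rat}). You instead give a short, self-contained argument: compactness (A2) plus strict positivity (A5) give a uniform lower bound $d_\ell > 0$ on $D_\ell$, so $1/D_\ell$ is continuous and bounded; the substitution $\omega = g/D_\ell$ into \eqref{eq:abscont_per} then immediately pins down $\nu_\ell$ as the measure with density $1/D_\ell$ with respect to $\mu$, and a Riesz-type argument on the compact set $[0,T]\times X$ gives uniqueness. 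This is more elementary and transparent than the citation route, and it pays a small dividend: you make explicit that A5 alone is not sufficient as the proposition's wording might suggest, and that the compactness hypothesis A2 is doing genuine work in upgrading pointwise positivity of $D_\ell$ to a uniform lower bound; without it, $1/D_\ell$ could be unbounded and the density non-finite. You also get the finite-mass bound $\inp{1}{\nu_\ell} \leq \inp{1}{\mu}/d_\ell$ for free, which the paper reproves separately in Lemma~\ref{lem:finite_mass} using essentially the same observation.
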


The Lie derivative in \eqref{eq:peak_meas_flow} can be expanded using \eqref{eq:abscont_per} as ($\forall v \in C^1([0, T] \times X)$ with assumption A5 in place)
\begin{subequations}
\begin{align}
    \inp{\Lie_f v}{\mu} &= \left\langle \partial_t v + \textstyle \left( f_0 + \sum_{\ell=1}^L (N_\ell/D_\ell)\right) \cdot \nabla_x v, \mu \right\rangle \\
    &\inp{\Lie_{f_0} v + \textstyle \sum_{\ell=1}^L (N_\ell/D_\ell) \cdot \nabla_x v}{\mu}  \\ 
    &= \inp{\Lie_{f_0} v}{\mu} + \textstyle \sum_{\ell=1}^L \inp{(N_\ell/D_\ell) \cdot \nabla_x v(t, x)}{\mu}  \\
    &= \inp{\Lie_{f_0} v}{\mu} + \textstyle \sum_{\ell=1}^L \inp{N_\ell \cdot \nabla_x v}{\nu_\ell}.
\end{align}
\end{subequations}

% A measure \ac{LP} for peak estimation using the sum-of-rational decomposition is:
Problem \ref{prob:peak_rat_meas} uses the sum-of-rational framework to pose a measure peak estimation \ac{LP}:
\begin{prob}
    \label{prob:peak_rat_meas}
Find an initial measure $\mu_0$, a relaxed occupation measure $\mu$, a peak measure $\mu_p$, and a set of per-rational measures $\{\nu_\ell\}_{\ell=1}^L$ to supremize
\begin{subequations}
\label{eq:peak_rat_meas}
\begin{align}
p^*_r = & \ \sup_{\mu_0, \mu, \ \mu_p, \{\nu_\ell\}}  \quad  \inp{p}{\mu_p} \label{eq:peak_rat_meas_obj} \\
   \textrm{s.t.} \quad  & \mu_p = \delta_0 \otimes\mu_0 + \Lie_{f_0}^\dagger \mu + \textstyle \sum_{\ell=1}^\ell (N_\ell \cdot \nabla_x)^\dagger \nu_\ell \label{eq:peak_rat_meas_flow}\\
    & \forall \ell: \  \ D_\ell^\dagger \nu_\ell = \mu \label{eq:peak_rat_meas_abscont}\\
    & \inp{1}{\mu_0} = 1 \label{eq:peak_rat_meas_prob}\\
    & \mu, \mu_p, \{\nu_\ell\} \in \Mp{[0, T] \times X} \label{eq:peak_rat_meas_peak}\\
    & \mu_0 \in \Mp{X_0}. \label{eq:peak_rat_meas_init}
\end{align}
\end{subequations}
\end{prob}

\begin{cor}
\label{eq:no_relaxation_rat}
    Under A1-A5, the objective values of Problem \ref{prob:peak_traj} and \ref{prob:peak_rat_meas} will be equal with $P^* = p^*_r$. 
\end{cor}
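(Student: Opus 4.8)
The plan is to prove $p^* = p^*_r$ by setting up an objective-preserving correspondence between feasible points of Problem~\ref{prob:peak_meas} and Problem~\ref{prob:peak_rat_meas}; combining this with Theorem~\ref{eq:no_relaxation} (which gives $p^* = P^*$ under A1--A4) then yields $P^* = p^*_r$. A preliminary observation: Assumption A2 forces $X$ and $X_0$ to be compact, since a nonempty product set is compact only if each factor is (projections are continuous surjections, and $X_0$ is nonempty because it carries the probability measure $\mu_0$). Hence each $D_\ell$, being a polynomial that is strictly positive on the compact set $[0,T]\times X$ by A5, is bounded below by some $\varepsilon_\ell>0$, so $1/D_\ell \in C_+([0,T]\times X)$ is bounded. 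This is the fact that removes all integrability concerns.

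For $p^*_r \le p^*$: take $(\mu_0,\mu,\mu_p,\{\nu_\ell\})$ feasible for \eqref{eq:peak_rat_meas}; I claim $(\mu_0,\mu,\mu_p)$ is feasible for \eqref{eq:peak_meas} with the same value of $\inp{p}{\mu_p}$. Constraints \eqref{eq:peak_meas_prob}--\eqref{eq:peak_meas_init} are inherited verbatim. For \eqref{eq:peak_meas_flow}, Proposition~\ref{prop:density_pos} gives $\tfrac{d\nu_\ell}{d\mu}=1/D_\ell$, so reading the Lie-derivative expansion displayed just before Problem~\ref{prob:peak_rat_meas} in reverse shows $\inp{\Lie_{f_0}v}{\mu} + \sum_{\ell} \inp{N_\ell\cdot\nabla_x v}{\nu_\ell} = \inp{\Lie_f v}{\mu}$ for every $v\in C^1([0,T]\times X)$; substituting into \eqref{eq:peak_rat_meas_flow} recovers exactly \eqref{eq:peak_meas_flow}. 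Conversely, for $p^*\le p^*_r$, take $(\mu_0,\mu,\mu_p)$ feasible for \eqref{eq:peak_meas} and define $\nu_\ell\in\Mp{[0,T]\times X}$ by $\inp{g}{\nu_\ell} := \inp{g/D_\ell}{\mu}$ for $g\in C([0,T]\times X)$. This is a well-defined finite nonnegative Borel measure by the Riesz representation theorem, precisely because $g\mapsto \inp{g/D_\ell}{\mu}$ is a positive bounded linear functional on the compact Hausdorff space $[0,T]\times X$ (bounded since $1/D_\ell$ is). It satisfies \eqref{eq:peak_rat_meas_abscont} because $\inp{\omega D_\ell}{\nu_\ell} = \inp{\omega D_\ell/D_\ell}{\mu} = \inp{\omega}{\mu}$, and the same expansion (read forward) turns \eqref{eq:peak_meas_flow} into \eqref{eq:peak_rat_meas_flow}. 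Thus $(\mu_0,\mu,\mu_p,\{\nu_\ell\})$ is feasible for \eqref{eq:peak_rat_meas} with identical objective.

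Putting the two inequalities together gives $p^* = p^*_r$, and Theorem~\ref{eq:no_relaxation} then delivers $P^* = p^*_r$. The only points requiring care are measure-theoretic bookkeeping: checking that the candidate $\nu_\ell$ is a genuine Radon measure (handled by the boundedness of $1/D_\ell$ on the compact domain, itself a consequence of A5 and A2), and verifying that each adjoint identity is applied only against its admissible test class ($C$ for $D_\ell^\dagger\nu_\ell=\mu$, $C^1$ for the Liouville operators). Neither poses a real obstacle; indeed strict positivity of the $D_\ell$ makes the map $\mu \mapsto \{\nu_\ell\}$ a bijection between the feasible sets once the redundant $\nu_\ell$ are eliminated via \eqref{eq:peak_rat_meas_abscont}, so the two programs have literally the same feasible set up to this reparametrization.
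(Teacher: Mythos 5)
Your proposal is correct and follows essentially the same route as the paper: you establish an objective-preserving bijection between the feasible sets of Problems~\ref{prob:peak_meas} and~\ref{prob:peak_rat_meas} (using strict positivity and boundedness of the $D_\ell$ on the compact domain to pin down $\nu_\ell = (1/D_\ell)\mu$ in both directions), giving $p^* = p^*_r$, and then invoke Theorem~\ref{eq:no_relaxation} for $P^* = p^*$. The paper's proof is terser --- it cites Proposition~\ref{prop:density_pos} and Theorem~3.1 of \cite{ambrosio2014continuity} to assert the same correspondence --- but your explicit Riesz construction of $\nu_\ell$ and your careful check that $1/D_\ell$ is bounded (via A2 compactness plus A5) merely make the paper's implicit bookkeeping precise, rather than taking a different path.
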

\begin{proof}
    Theorem \ref{eq:no_relaxation} ensures that $P^*=p^*$ under A1-A4. The finite nature of the densities $1/D_\ell$ from Proposition \ref{prop:density_pos} (proved by Theorem 2.1 of \cite{bugarin2016minimizing}) ensures that the absolute-continuity-based construction process in \eqref{eq:abscont_per} will result in each $\nu_\ell$ having identical support to $\mu$. As a result, $(\mu_0, \mu_p, \mu)$ from \eqref{eq:peak_rat_meas_flow} will form a relaxed occupation measure, thus ensuring no relaxation gap by Theorem 3.1 of \cite{ambrosio2014continuity} (used in Theorem \ref{eq:no_relaxation}).
\end{proof}

\begin{rmk}
    If A5 is not imposed, then it is possible for the measure $\nu_\ell$ from \eqref{eq:abscont_per} to be unconstrained (with $D_\ell = 0$ at some $(t, x)$). It therefore cannot be presumed that the supports of $\nu_\ell$ and $\mu$ are identical. These degrees of freedom in $\nu_\ell$ could allow for the strict (and possibly unbounded) upper-bound $p^*_r > p^*$.
\end{rmk}

% \urg{Can we relax A5 to $D_\ell > 0$ along the trajectories of $x(t \mid x_0)$ starting from $X_0$?}
% \urg{No relaxation gap (with decomposition from \cite{bugarin2016minimizing})}

\subsection{Function Program}

The dual \ac{LP} for \ref{prob:peak_rat_meas} is contained in Problem \ref{prob:peak_rat_cont}:
% Dual variables $(v, \gamma, \{q_\ell\})$ may be introduced in order to form the dual \ac{LP} to \eqref{eq:peak_rat_meas} as
\begin{prob}
    \label{prob:peak_rat_cont} Find a $C^1$ auxiliary function $v$, a scalar $\gamma$, and per-rational continuous functions $\{q_\ell\}_{\ell=1}^L$ to infimize
\begin{subequations}
\label{eq:peak_rat_cont}
    \begin{align}
      d^*_r = &\ \inf_{\gamma \in \R, v, \{q_\ell\}} \gamma \\
      \textrm{s.t.} \quad &\forall x \in X_0: \nonumber\\
      & \quad \gamma \geq v(0, x) \\
      &\forall (t, x) \in [0, T] \times X: \nonumber \\
      & \quad v(t, x) \geq p(x) \\
      & \quad -\Lie_{f_0} v(t, x) \textstyle -\sum_{\ell=1}^L q_\ell(t, x) \geq 0 \quad \\
      &\forall (t, x) \in [0, T] \times X, \ \forall \ell\in 1..L: \nonumber \\
      & \quad D_\ell(t, x) q_\ell(t, x) - N_\ell(t, x) \cdot \nabla_x v(t, x) \geq 0  \\
      & v \in C^1([0, T] \times X) \\
      & \forall \ell: \ q_\ell \in C([0, T] \times X).
    \end{align}
\end{subequations}
\end{prob}

% Strong duality with $p^*_r = d^*_r$ can be proven under the following additional assumption:
% % \urg{Strong duality proof}
% \begin{itemize}
%     \item[A6:] The functions $f_0, N_\ell, D_\ell$ are continuous in $[0, T] \times X$.
% \end{itemize}
\begin{lem}
\label{lem:finite_mass}
    The mass of all feasible measures $(\mu_0, \mu_p, \mu, \{\nu_\ell\})$ for solutions to Problem \ref{prob:peak_rat_meas} are finite under A1-A5.
\end{lem}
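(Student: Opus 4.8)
The plan is to test the weak Liouville constraint \eqref{eq:peak_rat_meas_flow} against a handful of elementary $C^1$ functions and to combine the resulting identities with compactness of the domain. The mass of $\mu_0$ is already pinned down: \eqref{eq:peak_rat_meas_prob} states $\inp{1}{\mu_0}=1$, so $\mu_0$ is a probability measure. It therefore remains to bound the masses of $\mu_p$, $\mu$, and of each $\nu_\ell$.

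First I would bound $\mu_p$ and $\mu$. Pairing \eqref{eq:peak_rat_meas_flow} with the constant test function $v\equiv 1$ annihilates the flow term, since $\Lie_{f_0}1=0$, and all rational-correction terms, since $\nabla_x 1 = 0$; what remains is $\inp{1}{\mu_p} = \inp{1}{\delta_0\otimes\mu_0} = \inp{1}{\mu_0} = 1$, so $\mu_p$ is a probability measure. Next, pairing \eqref{eq:peak_rat_meas_flow} with $v(t,x)=t$ gives $\Lie_{f_0}t = \partial_t t + f_0\cdot\nabla_x t = 1$ and $\nabla_x t = 0$, while the initial term vanishes because $v(0,x)=0$; hence the identity collapses to $\inp{1}{\mu} = \inp{t}{\mu_p}$. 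Since $\mu_p$ is supported on $[0,T]\times X$ and has unit mass, $\inp{t}{\mu_p}\le T$, and therefore $\inp{1}{\mu}\le T<\infty$ (an infinite mass for $\mu$ would force the right-hand side of the identity to be infinite, a contradiction).

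Finally I would bound each $\nu_\ell$ using the absolute-continuity constraint \eqref{eq:peak_rat_meas_abscont} in its integral form \eqref{eq:abscont_per}. By A2 the set $[0,T]\times X$ is compact (it is the continuous projection of the compact set $[0,T]\times X_0\times X$), by A6 the polynomial $D_\ell$ is continuous on it, and by A5 it is strictly positive there; hence $D_\ell$ attains a minimum $d_\ell := \min_{(t,x)\in[0,T]\times X}D_\ell(t,x) > 0$. Choosing $\omega\equiv 1$ in \eqref{eq:abscont_per} yields $\inp{D_\ell}{\nu_\ell} = \inp{1}{\mu}\le T$, and the bound $\inp{D_\ell}{\nu_\ell}\ge d_\ell\inp{1}{\nu_\ell}$ then gives $\inp{1}{\nu_\ell}\le T/d_\ell<\infty$. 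Collecting the bounds on $\mu_0,\mu_p,\mu$ and the $\nu_\ell$ completes the argument.

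The only delicate point is this last step. Proposition \ref{prop:density_pos} supplies the densities $d\nu_\ell/d\mu = 1/D_\ell$, but finiteness of these densities pointwise does not by itself bound the mass of $\nu_\ell$; one genuinely needs compactness (A2) together with continuity (A6) to upgrade the pointwise strict positivity of A5 into the uniform lower bound $d_\ell>0$. Everything else is a direct substitution into the weak form of \eqref{eq:peak_rat_meas_flow}, so I do not anticipate any real obstacle beyond bookkeeping.
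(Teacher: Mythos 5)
Your proof is correct and follows essentially the same route as the paper: pair \eqref{eq:peak_rat_meas_flow} with $v\equiv 1$ and $v=t$ to fix the masses of $\mu_p$ and $\mu$, then use the absolute-continuity constraint and the uniform positive lower bound on $D_\ell$ (available by A2, A5, A6) to bound $\inp{1}{\nu_\ell}$. The only cosmetic difference is that you test $\omega\equiv 1$ in \eqref{eq:abscont_per} directly rather than invoking the Radon--Nikodym density $1/D_\ell$ from Proposition~\ref{prop:density_pos}; your resulting bound $\inp{1}{\nu_\ell}\le T/\min_{[0,T]\times X} D_\ell$ is in fact the cleaner (and correctly stated) form of the bound.
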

\begin{proof}
The mass of $\mu_0$ is constrained to 1 by \eqref{eq:peak_rat_meas_prob}. This pins the mass of $\mu_p$ to 1 by letting $v(t, x) = 1$ be a test function to \eqref{eq:peak_rat_meas_flow}. Applying $v(t, x) = t$ to \eqref{eq:peak_rat_meas_flow} results in $\inp{1}{\mu} = \inp{t}{\mu_p} \leq T$. The masses of $\nu_\ell$ are set to $\inp{1}{\nu_\ell} = \inp{1/D_\ell}{\mu}$. The quantities $\inp{1/D_\ell}{\mu}$ are finite by A5 (because $D_\ell>0$ over $[0, T] \times X$), such that the finite mass bound $\forall \ell: \inp{1}{\nu_\ell} \leq T \sup_{(t, x) \in [0, T] \times X} D_\ell (t, x)$ is respected.
\end{proof}

\begin{thm}
\label{thm:strong_dual}
    Under A1-A5, programs \eqref{eq:peak_rat_meas} and \eqref{eq:peak_rat_cont} will satisfy $p^*_r = d^*_r$ (strong duality).
    % (weak duality). Under A1-A6, strong duality with $p^*_r = d^*_r$ will also hold.
\end{thm}
\begin{proof}
    See Appendix \ref{app:duality}.
\end{proof}

% \begin{rmk}
    
% \end{rmk}
\section{Finite-Dimensional Truncation}
\label{sec:rat_sdp}

Program \eqref{eq:peak_rat_cont} must be discretized into a finite-dimensional program in order to admit tractable numerical solutions by computational means. We will first introduce the moment-\ac{SOS} hierarchy of \acp{SDP}, and then use this  hierarchy in order to perform  finite-dimensional truncations of \eqref{eq:peak_rat_cont}.

\subsection{Sum of Squares Background}

A polynomial $\theta \in \R[x]$ is \ac{SOS} if there exists a tuple of polynomials 
$\{\phi_j\}_{j=1}^{j_{\max}} \in \R[x]^{j_{\max}}$ 
such that $p(x) = \sum_{j=1}^{j_{\max}} \phi_j(x)^2$. 
The set of all \ac{SOS} polynomials in indeterminates $x$ is marked as $\Sigma[x] \subset \R[x]$, and the bounded-degree subset of \ac{SOS} polynomials with degree less than or equal to $2k$ is $\Sigma[x]_{\leq 2k}$. The set of \ac{SOS} polynomials is a strict subset of the cone of nonnegative polynomials, with equality holding only in the cases of univariate polynomials, general quadratics, or bivariate quartics \cite{hilbert1888darstellung, blekherman2006there}. To each \ac{SOS} polynomial $\theta$, there exists a 
(nonunique) tuple of a size $s \in \N$, a polynomial vector $m(x) \in \N^s$, and a \ac{PSD} \textit{Gram} matrix $Q \in \psd_+^s$ such that $\theta(x) = m(x)^T Q m(x)$. When $x$ has dimension $n$ and $m(x)$ is chosen to be the vector of monomials of degrees $0$ to $k$, the size $s$ is $\binom{n+d}{d}$. Testing membership of a polynomial in the \ac{SOS} cone can therefore be done using \acp{SDP} \cite{parrilo2000structured}.

\Iac{BSA} set is a set defined by a finite number of bounded-degree polynomial inequality constraints. For any \ac{BSA} set $\K = \{x \mid g_j(x) \geq 0, j\in 1\ldots N_g\},$ the \ac{WSOS} cone $\Sigma[\K]$ is the class of polynomials $\theta \in \R[x]$ that admit the following representation in terms of \ac{SOS} polynomials $(\sigma_0, \{\sigma_j\})$:
\begin{subequations}
\label{eq:putinar}
    \begin{align}
        & \theta(x) = \sigma_0(x) + \textstyle \sum_j {\sigma_j(x)g_j(x)} \\
        &\exists  \sigma_0(x) \in \Sigma[x], \quad \forall j \in 1\ldots N_g: \sigma_j(x) \in \Sigma[x].\label{eq:putinar_variables}
    \end{align}
\end{subequations}

The set $\K$ is ball-constrained if there exists an $R \geq 0$ such that $R - \norm{x}^2_2 \in \Sigma[\K]$. Every compact set whose bounding radius $R$ is known may be rendered ball-constrained by appending the redundant constraint $R - \norm{x}^2_2$ to the description of $\K$. Every positive polynomial over a ball-constrained set $\K$ is also a member of $\Sigma[\K]$ (Putinar Positivestellensatz \cite{putinar1993compact}).
The multipliers $\sigma_j$ from \eqref{eq:putinar} certifying this positivity may generically have degrees that are exponential in $n$ and degree of $\theta$ \cite{nie2007complexity}. 

The truncated \ac{WSOS} cone $\Sigma[\K]_{\leq 2k}$ is the class of polynomials such that $\deg(\sigma_0) \leq 2k$ and $\forall j: \ \deg(\sigma_j) \leq 2k$. The process of replacing a nonconvex polynomial inequality constraint with \iac{WSOS} constraint and increasing the degree until convergence is called the moment-\ac{SOS} hierarchy.

% \urg{Fill in \ac{SOS} background for proofs of polynomial nonnegativity}

% \urg{possibly put this later}

% Putinar Psatz \cite{putinar1993compact}

\subsection{SOS Program}

We will impose the following constraints in order to utilize the moment-\ac{SOS} hierarchy:

\begin{itemize}
    \item[A6:] $X_0$ and $X$ are ball-constrained \ac{BSA} sets.
    % \item[A8:] The functions $f_0, N_\ell, D_\ell$ are polynomials.
\end{itemize}

For a fixed degree $k \in \N$ and index $\ell \in 1\ldots L$, let us define the following degree of:
\begin{equation}
    \varepsilon_\ell = \max(\deg(D_\ell), \deg(N_\ell)-1).
\end{equation}

The degree-$k$ \ac{SOS} truncation of \eqref{eq:peak_rat_cont} is:
\begin{prob}
\label{prob:peak_rat_sos}
Find a scalar $\gamma$ and  polynomials $v, \{q_\ell\}_{\ell=1}^L$ to minimize
%\floor{\max(\deg(D_\ell)/2
%\urg{make sure the degrees are consistent.}
\begin{subequations}
\label{eq:peak_rat_sos}
    \begin{align}
      d^*_{r, k} = &\ \min_{\gamma \in \R, v, \{q_\ell\}} \gamma \\      
      & \gamma - v(0, x) \in \Sigma[X_0]_{\leq 2k}\\
      & \quad v(t, x) - p(x) \in \Sigma[([0, T] \times X)]_{\leq 2k}\\
      &-\Lie_{f_0} v(t, x) \textstyle -\sum_{\ell=1}^L q_\ell(t, x) \label{eq:peak_rat_sos_big} \\
      & \ \qquad \in \Sigma[([0, T] \times X)]_{\leq 2k + 2 \floor{(\deg(f_0) -1)/2}} \nonumber   \\
      &\forall \ell\in 1\ldots L: \nonumber \\
      & \quad D_\ell(t, x) q_\ell(t, x) - N_\ell(t, x) \cdot \nabla_x v(t, x) \\
      & \ \qquad \in  \Sigma[([0, T] \times X)]_{ \leq 2k + 2\floor{\varepsilon_\ell/2}} \nonumber   \\
      & v \in \R[t, x]_{\leq 2k}\\
      & \forall \ell \in 1\ldots L: \ q_\ell \in \R[t, x]_{\leq 2k}.
    \end{align}
\end{subequations}
\end{prob}

The following lemma ensuring finiteness of measure masses is required to prove convergence of Problem \ref{prob:peak_rat_sos} to the optimal value of Problem \ref{prob:peak_rat_meas} as $k\rightarrow \infty$:
\begin{thm}
    Under assumptions A1-A6, the finite truncations will converge from above as $\lim_{k\rightarrow \infty} d^*_{r, k} = P^*$
\end{thm}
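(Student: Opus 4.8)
The plan is to prove the chain $\lim_{k\to\infty} d^*_{r,k} = d^*_r = p^*_r = P^*$. The last two equalities are already available: $p^*_r = P^*$ from Corollary~\ref{eq:no_relaxation_rat} (which uses A1--A5) and $d^*_r = p^*_r$ from strong duality in Theorem~\ref{thm:strong_dual} (which uses A1--A6, implied by A1--A8), and Lemma~\ref{lem:finite_mass} together with strong duality guarantees all these quantities are finite. So it suffices to show that the moment--SOS hierarchy applied to the function program \eqref{eq:peak_rat_cont} converges, $d^*_{r,k}\downarrow d^*_r$; assumptions A7 (ball-constrained BSA sets) and A8 (polynomial data) are the additional ingredients that make the hierarchy applicable.

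Two observations dispatch the ``from above'' half. \emph{Upper bound:} any tuple $(\gamma,v,\{q_\ell\})$ feasible for the degree-$k$ program in Problem~\ref{prob:peak_rat_sos} is feasible for the continuous program in Problem~\ref{prob:peak_rat_cont}, since every element of a truncated cone $\Sigma[\K]_{\le 2k}$ is nonnegative on $\K$; hence $d^*_{r,k}\ge d^*_r$ for all $k$, with the convention $d^*_{r,k}=+\infty$ when Problem~\ref{prob:peak_rat_sos} is infeasible. \emph{Monotonicity:} the truncated WSOS cones and the bounded-degree polynomial spaces are nested ($\Sigma[\K]_{\le 2k}\subseteq\Sigma[\K]_{\le 2k+2}$, $\R[t,x]_{\le 2k}\subseteq\R[t,x]_{\le 2k+2}$), so the feasible set of Problem~\ref{prob:peak_rat_sos} grows with $k$ and $d^*_{r,k}$ is non-increasing. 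Hence $\lim_k d^*_{r,k}$ exists and is $\ge d^*_r$.

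The substance is showing the limit does not exceed $d^*_r$, via an approximation argument in the spirit of the standard convergence proofs for the moment--SOS hierarchy. Fix $\epsilon>0$ and a tuple $(\gamma,v,\{q_\ell\})$ feasible for Problem~\ref{prob:peak_rat_cont} with $\gamma\le d^*_r+\epsilon$. By compactness of $[0,T]\times X_0\times X$ (A2, A7) and density of polynomials in $C^1$ (for $v$, jointly with $\partial_t v$ and $\nabla_x v$) and in $C^0$ (for each $q_\ell$), pick polynomials $\hat v$ and $\hat q_\ell^{\,0}$ within a small tolerance $\eta$ of $v$ and $q_\ell$ in the respective norms; by A8 the perturbed constraint expressions remain polynomials, and by A5 with compactness the constants $\overline{D}_\ell:=\max_{[0,T]\times X}D_\ell$ and $d_{\min}:=\min_\ell\min_{[0,T]\times X}D_\ell>0$ are finite and positive. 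I would then restore \emph{strict} positivity by a coordinated shift: put $\hat q_\ell=\hat q_\ell^{\,0}+c_\ell$ with $c_\ell$ of order $\eta$ large enough that $D_\ell\hat q_\ell-N_\ell\cdot\nabla_x\hat v\ge d_{\min}c_\ell-O(\eta)>0$ on $[0,T]\times X$; replace $\hat v$ by $\hat v+c\,(T-t)$ with $c>\sum_\ell c_\ell$, which strictly improves the constraint $v-p\ge0$, leaves the per-rational constraints untouched (the added term has zero $x$-gradient), and makes $-\Lie_{f_0}\hat v-\sum_\ell\hat q_\ell$ bounded below by $c-\sum_\ell c_\ell-O(\eta)>0$ (the added term has constant Lie derivative $-c$); and take $\hat\gamma=\gamma+cT+O(\eta)$ large enough that $\hat\gamma-\big(\hat v+c(T-t)\big)\big|_{t=0}>0$ on $X_0$. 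Every constraint polynomial is now strictly positive on a ball-constrained BSA set, so by Putinar's Positivstellensatz each admits a weighted-SOS representation of the form \eqref{eq:putinar} of some finite degree; choosing $k$ at least the largest of these degrees makes $(\hat\gamma,\hat v+c(T-t),\{\hat q_\ell\})$ feasible for Problem~\ref{prob:peak_rat_sos}, so $d^*_{r,k}\le\hat\gamma\le d^*_r+\epsilon+O(\eta)$. Since $c,c_\ell=O(\eta)\to0$ as $\eta\to0$ for fixed $\epsilon$, and then $\epsilon\to0$, the limit equals $d^*_r$; combined with $d^*_{r,k}\ge d^*_r$ and the chain above, $\lim_k d^*_{r,k}=P^*$.

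I expect the coordinated perturbation to be the main obstacle: the variable $v$ is shared between the constraint $-\Lie_{f_0}v-\sum_\ell q_\ell\ge0$ and the $L$ per-rational constraints $D_\ell q_\ell-N_\ell\cdot\nabla_x v\ge0$, so one must shift $\gamma$, $v$, and all the $q_\ell$ simultaneously while making \emph{every} constraint strictly positive and keeping the induced objective increase $o(1)$ as the approximation tolerance vanishes. The shift $c(T-t)$ is chosen precisely because it has zero $x$-gradient (the per-rational constraints are undisturbed), is nonnegative and decreasing in $t$ (the $v\ge p$ constraint only improves), and has constant Lie derivative $-c$ (it buys exactly the slack needed for $-\Lie_{f_0}v-\sum_\ell q_\ell$); carrying out the sign bookkeeping over all constraints and confirming the orders $c,c_\ell=O(\eta)$ is the technical heart. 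A secondary point of care is that $v$ must be approximated in $C^1$ norm rather than merely $C^0$, so that $\Lie_{f_0}\hat v$ and $\nabla_x\hat v$ are uniformly close to $\Lie_{f_0}v$ and $\nabla_x v$ — standard on compact sets but worth stating explicitly.
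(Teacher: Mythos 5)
Your argument takes a genuinely different route from the paper's. The paper uses the same opening chain you do (Corollary~\ref{eq:no_relaxation_rat}, Theorem~\ref{thm:strong_dual}, and Lemma~\ref{lem:finite_mass} to establish $P^* = p^*_r = d^*_r$ with bounded masses), but then closes the argument in a single line by citing Corollary~8 of \cite{tacchi2022convergence}, a general convergence theorem for the moment--SOS hierarchy over compact Archimedean supports with bounded masses. You instead give a self-contained, dual-side (function/Putinar) proof: approximate a near-optimal continuous feasible tuple by polynomials, restore strict positivity by a coordinated perturbation, and invoke Putinar's Positivstellensatz to obtain finite-degree WSOS certificates. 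Your route is more elementary and makes explicit where A7--A8 enter; the paper's route is shorter by delegating to a general theorem.

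There is, however, a gap in the coordinated perturbation. You claim the shift $\hat v \mapsto \hat v + c(T-t)$ ``strictly improves the constraint $v - p \geq 0$,'' but this is false on the slice $\{t = T\}$: there $c(T-t) = 0$, so no slack is added. If the continuous $v$ satisfies $v(T, x^*) = p(x^*)$ at some $x^* \in X$ (which Problem~\ref{prob:peak_rat_cont} does not forbid), the $\eta$-approximant $\hat v$ can have $\hat v(T, x^*) - p(x^*) < 0$; the perturbed constraint polynomial is then not even nonnegative, let alone strictly positive, and Putinar cannot be invoked. The remedy is standard: additionally shift $\hat v$ by a constant $c_0 > \eta$. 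The constant has zero time derivative and zero gradient, so it leaves the Lie-derivative and per-rational constraints unchanged and costs only an extra $c_0 = O(\eta)$ in $\hat\gamma$. With that repair, your bookkeeping (nestedness of the truncated WSOS cones giving monotone decrease and $d^*_{r,k} \geq d^*_r$, the orders $c, c_\ell, c_0 = O(\eta)$, and the limit order $\eta \to 0$ then $\epsilon \to 0$) is sound and the proof goes through.
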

\begin{proof}
    We first note that $P^* = d^*_r$ under A1-A5 by Corollary \ref{eq:no_relaxation_rat} using Theorem \ref{thm:strong_dual} (strong duality) and Lemma \ref{lem:finite_mass} (finite mass).

After noting that the masses of all feasible measures are bounded by Lemma \ref{lem:finite_mass}, convergence in objective to $d^*_r = P^*$ is proven by Corollary 8 of \cite{tacchi2022convergence}.
% This convergence is assured \rev{by Corollary 8 of \cite{tacchi2022convergence} under the Archimedean assumption and \rwe{Lemma \ref{lem:finite_mom}.}}
\end{proof}

\subsection{Computational Complexity}
The dominant-size Gram \ac{PSD} constraint of program \eqref{eq:peak_rat_sos} occurs at \eqref{eq:peak_rat_sos_big}, and has size $\binom{n+1+2k + 2 \floor{\deg(f_0)/2}}{n+1}$. When using an interior point method, the scaling of \eqref{eq:peak_rat_sos} therefore grows as $O(n^{6k})$ (nominally) or exponentially (in a degenerate case from Proposition 6 of \cite{gribling2023note}). Further complexity reductions such as symmetry and term sparsity may be employed if present to reduce computation time, but exploiting sparsity may lead to different finite-degree SOS optimal values.

\section{Numerical Examples}

\label{sec:examples}

Julia code to generate all examples in this work is publicly available online\footnote{\href{http://doi.org/10.5905/ethz-1007-711}{http://doi.org/10.5905/ethz-1007-711}}. \ac{SOS} programs were posed using a Correlative-Term-Sparsity interface (CS-TSSOS) \cite{wang2022cs, wang2023exploiting}. The \ac{SOS} programs were converted to \acp{SDP} using \texttt{JuMP} \cite{Lubin2023jump}. All \acp{SDP} were then solved by Mosek 10.1 \cite{mosek101}.

% Both of the presented examples are derived from chemical reaction networks obeying Michaelis-Menten kinetics

\subsection{Two-Species Chemical Reaction Network}

The first example involves analysis of a chemical reaction network. The states $(x_1, x_2)$ represent the nonnegative concentrations of the two species. The species undergo degradation at a linear rate, and promotion according to Michaelis-Menten kinetics (therefore possessing a globally asymptotically stable equilibrium point in the nonnegative orthant \cite{blanchini2023michaelis}). The relevant dynamics evolving in $X = [0, 1]^2$ over a time horizon of $T = 6$ are:
\begin{subequations}
\label{eq:ex_mm}
    \begin{align}
        \dot{x}_1 &= -\frac{3}{4} x_1 + \frac{1}{1+4.5 x_2}\\
        \dot{x}_2 &= -\frac{9}{16}x_2 + \frac{1.25}{1+6.75 x_1}.
    \end{align}
\end{subequations}

We note that the unique equilibrium point of \eqref{eq:ex_mm} occurs at $x_{eq} = [0.3203, 0.7027].$
% xroot=0.288304;
% yroot=0.632456;
Both of the denominators in \eqref{eq:ex_mm} are positive over $X$, thus satisfying assumption A5. Peak estimation is performed to bound the maximal value of $p(x) = x_2$ for trajectories beginning in the disc initial set of $X_0 = \{x \mid 0.3^2 - (x_1-0.3)^2 - (x_2 - 0.3)^2\}$.

A lower-bound on $p(x) = x_2$ acquired from gridded numerical ODE-sampling is $0.8157$. Table \ref{tab:mm} reports upper-bounds acquired by finite-degree \ac{SOS} truncations in \eqref{eq:peak_rat_sos}, as well as bounds discovered by comparison methods from Appendix \ref{app:other_methods} at the same degrees for $v(t, x)$. Table \ref{tab:mm_time} reports the time taken for Mosek to return solutions in Table \ref{tab:mm}. All methods return an upper bound of $P^* \leq 1$ at degree $k=1$. Our method (Problem \ref{prob:peak_rat_sos}) returns the lowest peak estimate at each degree $k$ for this experiment.

\begin{table}[h]
    \centering
    \caption{Bounds for Michaelis-Menten Network \eqref{eq:ex_mm}}
    \begin{tabular}{r|ccccc}
Degree $k$                  &  2      & 3      & 4      & 5    & 6  \\ \hline
Sum-of-rational \eqref{eq:peak_rat_sos}       & 0.8522 & 0.8159 & 0.8159 & 0.8159 & 0.8159 \\
Lifted \eqref{eq:peak_rat_sos_lift} & 0.9242 & 0.8200 & 0.8189 & 0.8170 & 0.8185 \\
Cleared    \eqref{eq:peak_rat_sos_clear} & 0.9202 & 0.8306 & 0.8237 & 0.8225 & 0.8210
    \end{tabular}    
    \label{tab:mm}
\end{table}

\begin{table}[h]
    \centering
    \caption{Timing (seconds) for Table \ref{tab:mm}}
    \begin{tabular}{r|ccccc}
Degree $k$                  &  2      & 3      & 4      & 5    & 6  \\ \hline
Sum-of-rational \eqref{eq:peak_rat_sos}    &  0.063 & 0.922 &0.766 &3.093 & 5.438
\\
Lifted \eqref{eq:peak_rat_sos_lift} & 0.063 & 0.157 & 1.094 & 13.266 &
32.312
 \\
Cleared    \eqref{eq:peak_rat_sos_clear} & 0.110 & 0.281 & 0.531& 1.015 &
1.9690

    \end{tabular}    
    \label{tab:mm_time}
\end{table}

Degree $k=6$ bounds (black dotted lines) and sample trajectories (colored curves) starting from $X_0$ are plotted in Figure \ref{fig:mm}. The $k=6$ bounds from Table \ref{tab:mm} are visually indistinguishable on Figure \ref{fig:mm}. The  red dot marks the location of the stable equilibruim point $x_{eq}$.

\begin{figure}[h]
    \centering
    \includegraphics[width=\linewidth]{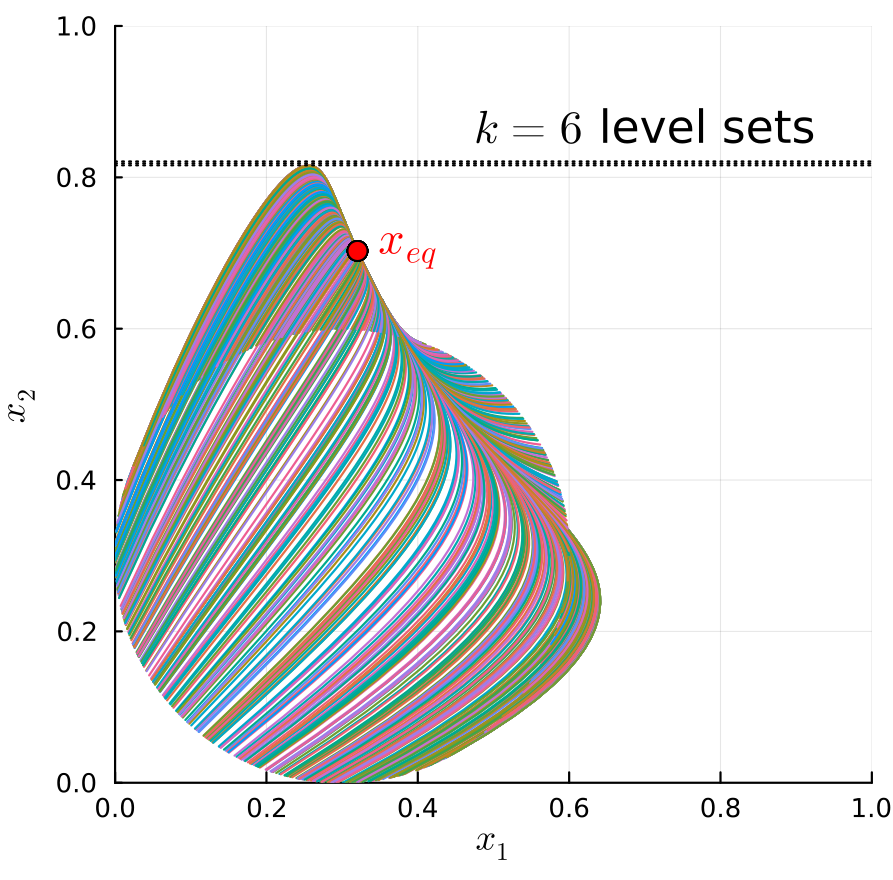}
    \caption{Trajectories and $k=6$ bounds for \eqref{eq:ex_mm}, along with position of the unique equilibrium point $x_{eq}$}
    \label{fig:mm}
\end{figure}

\subsection{Three-state Rational Twist System}

The second example involves a rational modification of the Twist system from Equation (37) of \cite{miller2023distance}. The three-state rational twist system is expressed in terms of matrix parameters $(A, B)$ 
    \begin{align}
    \label{eq:twist_parameters}
    A &= \begin{bmatrix}-1 & 1 & 1\\ -1 &0 &-1\\ 0 & 1 &-2\end{bmatrix} &  B &=  \frac{1}{2}\begin{bmatrix}-1 & 0 & -1\\ 0 &1 &1\\ 1 & 1 &0\end{bmatrix},
       \end{align}
to form the expression of ($\forall i \in 1..3$):
\begin{align}
    \label{eq:twist_rat_dynamics}
        \dot{x}_i(t) &= \sum_{j=1}^3 3 B_{ij} x_j + \frac{A_{ij}x_j - 4 B_{ij}x_j^3}{0.5+x_i^2}.
\end{align}
    %     \dot{x}_i(t) &= \textstyle \sum_{j=1}^3 A_{ij} \frac{x_j}{0.5+x_j^2} - B_{ij}\left(4\frac{x_j^3}{0.5+x_j^2} - 3x_j\right).
The polynomial Twist dynamics in \cite{miller2023distance} lacks the $(0.5+x_i^2)$ denominators. 
Peak estimation of \eqref{eq:twist_rat_dynamics} occurs over the state space of $X = [-1, 1]^3$ in a time horizon of $T=6$. The initial set $X_0$ is the two-dimensional box $X \mid_{x_3 = 0}$. It is desired to upper-bound the peak value of $p(x) = x_3^2$ along trajectories of \eqref{eq:twist_rat_dynamics}. Tables \ref{tab:twist} and \ref{tab:twist_time} compile computed upper-bounds of $p(x)$ along rational Twist trajectories and solution timing, in the same style as in Tables \ref{tab:mm} and \ref{tab:mm_time}. The Sum-of-Ratios program \eqref{eq:peak_rat_sos} returns a bound of $P^* \leq 1$ at degree $k=1$.

\begin{table}[h]
    \centering
    \caption{Bounds for Rational Twist \eqref{eq:twist_rat_dynamics}}
    \begin{tabular}{r|ccccc}
Degree $k$                  &  2      & 3      & 4      & 5    & 6  \\ \hline
Sum-of-rational \eqref{eq:peak_rat_sos}       & 0.9652 & 0.4321 & 0.3590 & 0.3501 & 0.3498\\
Lifted \eqref{eq:peak_rat_sos_lift} & 1 & 1 & 1 & 1 & 1  \\
Cleared    \eqref{eq:peak_rat_sos_clear} & 1 & 1 & 1 & 1 & 1
    \end{tabular}    
    \label{tab:twist}
\end{table}

\begin{table}[h]
    \centering
    \caption{Timing (seconds) for Table \ref{tab:twist}}
    \begin{tabular}{r|ccccc}
Degree $k$                  &  2      & 3      & 4      & 5    & 6  \\ \hline
Sum-of-rational \eqref{eq:peak_rat_sos}    &  0.125 & 
 0.562 & 3.125 & 8.390 & 38.280

\\
Lifted \eqref{eq:peak_rat_sos_lift} & 0.344 & 2.125 & 14.250 & 80.703 & 470.313

 \\
Cleared    \eqref{eq:peak_rat_sos_clear} & 
0.219 &  0.891 &  2.672 & 7.156 &25.156
    \end{tabular}    
    \label{tab:twist_time}
\end{table}

A lower bound on $P^*$ acquired through sampling (gridding the plane $X_0$) is $P^* > 0.3489$. Sampled trajectories and the $p(x) = x_3^2$ level set at the $k=6$ bound (Problem \ref{prob:peak_rat_sos}) for the rational Twist system are plotted in Figure \ref{fig:twist}.

\begin{figure}[h]
    \centering
    \includegraphics[width=\linewidth]{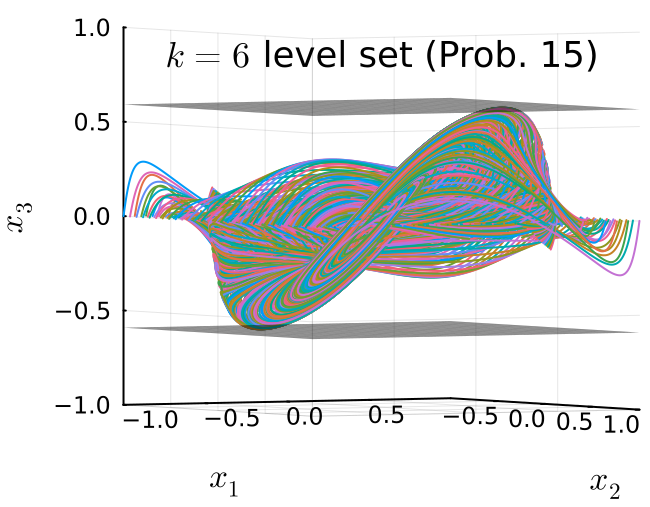}
    \caption{Trajectories and $k=6$ bound for \eqref{eq:twist_rat_dynamics}}
    \label{fig:twist}
\end{figure}

% The system in \eqref{eq:twist_rat_dynamics} features a central symmetry under the exchange $x \leftrightarrow (-x)$. As a result, the polynomials $v$ and $q_\ell$ can each be chosen to be invariant under this exchange without affecting feasibility of the \ac{SOS} program \cite{gatermann2004symmetry}. This symmetry reduction allows for a block-diagonalization of the Gram matrices, and therefore lowers computational complexity of solving the \acp{SDP}. 

% \input{sections/extensions}
\section{Conclusion}

\label{sec:conclusion}

This paper presents a scheme to perform peak estimation of rational systems. The sum-of-rational optimization technique from \cite{bugarin2016minimizing} is used to reduce the complexity of resulting moment-\ac{SOS}-derived \acp{SDP}. This decomposition scheme is nonconservative if all denominator polynomials $D_\ell$ are positive and a compact set is considered (generating assumptions A1-A6 in the dynamical systems setting). Effectiveness of our technique was demonstrated on example systems.

Future work includes investigating methods to reduce conservatism of assumptions A1-A6 (such as if $D_\ell > 0$ only along the graph of trajectories starting at $X_0$). Other options include applying our method towards rigid body kinematics,  decomposing network structure in dynamics \cite{schlosser2020sparse} through sparse sum-of-rational optimization (Theorem 3.2 of \cite{bugarin2016minimizing}), and analyzing non-\ac{ODE} rational system models.

% \urg{Conclude the paper. Summarize the problem and important findings. Add some extensions and future work.}

% An extended Arxiv version of this paper is available at \urg{[Arxiv link goes here] what will the arxiv version have that this work does not?}.

\section{Acknowledgements}

The authors would like to thank Mohamed Abdalmoaty, Mario Sznaier, Didier Henrion, and Matteo Tacchi for their advice and support.

% \urg{Optional acknowledgements.}
% The authors thank Milan Korda for his discussions about occupation measures and time-varying uncertainty.

\appendices
\renewcommand{\thesubsection}{\Alph{subsection}}
\section{Proof of Strong Duality}
\label{app:duality}

This appendix will prove Theorem \ref{thm:strong_dual}'s statement of strong duality between 
\eqref{eq:peak_rat_meas} and \eqref{eq:peak_rat_cont}.
Arguments and notation conventions from Theorem 2.6 of \cite{tacchi2021moment} will be used in this proof.

\subsection{Weak Duality}

We will express the variables of \eqref{eq:peak_rat_meas} and \eqref{eq:peak_rat_cont} as: 
\begin{subequations}
    \label{eq:variables}
\begin{align}
    \bbmu &= (\mu_0, \mu_p, \mu, \{\nu_\ell\}) \\   
    \bell &= (\gamma, v(t, x), \{q_\ell(t, x)\}).
\end{align}
\end{subequations}

The variable $\bbmu$ lies within the space of 
\begin{subequations}
\begin{align}
    \mathcal{X} &= \mathcal{M}(X_0) \times \mathcal{M}([0, T] \times X)^{L+1} \\
    \mathcal{X}' &= C(X_0) \times C([0, T] \times X)^{L+1},
\intertext{of which the nonnegative subcones of the spaces are}
\mathcal{X}_+ &= \mathcal{M}_+(X_0) \times \mathcal{M}_+([0, T] \times X)^{L+1} \\
    \mathcal{X}'_+ &= C_+(X_0) \times C_+([0, T] \times X)^{L+1}.
\end{align}
\end{subequations}

The variable $\bell$ is contained within the spaces of
\begin{subequations}
    \begin{align}
    \mathcal{Y} &= 0 \times C^1([0, T] \times X)' \times \mathcal{M}([0, T] \times X)^L\\
        \mathcal{Y}' &= \R \times C^1([0, T] \times X) \times C([0, T] \times X)^L.
    \end{align}
\end{subequations}

In accordance with the convention of Theorem 2.6 of \cite{tacchi2021moment}, we will also use the symbols $\mathcal{Y}_+ = \mathcal{Y}$ and $\mathcal{Y}'_+ = \mathcal{Y}'$.

The variables $\bbmu$ and $\bell$ satisfy
\begin{align}
    \bbmu &\in \mathcal{X}_+ & & \bell \in \mathcal{Y}'.
\end{align}

The cost and constraint vectors of \eqref{eq:peak_rat_meas} are
\begin{align}
    \mathbf{c} &= [0, p(x), 0, \0] \\
    \mathbf{b} &= [1, 0, \0].
\end{align}

The objectives of \eqref{eq:peak_rat_meas} and \eqref{eq:peak_rat_cont} respectively are
\begin{align}
    \inp{\mathbf{c}}{\bbmu} &= \inp{p(x)}{\mu_p} & \inp{\bell}{\mathbf{b}} &= \gamma.
\end{align}

An affine map $\mathcal{A}$ with adjoint $\mathcal{A}^\dagger$ can be defined with
\begin{subequations}
    \begin{align}
        \A(\bbmu) &= \begin{bmatrix}      
        \mu_p - \delta_0 \otimes\mu_0 - \Lie_{f_0}^\dagger \mu - \textstyle \sum_{\ell=1}^\ell (N_\ell \cdot \nabla_x)^\dagger \nu_\ell \\ \mu - D^\dagger_\ell \nu_\ell \\ \inp{1}{\mu_0}\end{bmatrix} \\
        \A'(\bell) &= \begin{bmatrix}
            \gamma - v(0,x) \\
            v(t, x) - p(x) \\
            \Lie_{f_0} v(t, x) + \textstyle \sum_{\ell=1}^L q_\ell(t, x) \\
            D_\ell(t, x) q_\ell(t, x) - N_\ell(t, x) \cdot \nabla_x v(t, x)
        \end{bmatrix}.
    \end{align}
\end{subequations}

Problems \ref{eq:peak_rat_meas} and \ref{eq:peak_rat_cont} may be placed into standard conic form with weak duality \cite{barvinok2002convex}:
\begin{align}
    p^*_r =& \sup_{\boldsymbol{\mu} \in \mathcal{X}_+} \inp{\mathbf{c}}{\boldsymbol{\mu}} & & \mathbf{b} - \A(\boldsymbol{\mu}) \in \mathcal{Y}_+. \label{eq:peak_rat_meas_std}\\
\intertext{The corresponding weak dual of \eqref{eq:peak_rat_meas_std} is \cite{barvinok2002convex}}
    d^*_r = &\inf_{\boldsymbol{\ell} \in \mathcal{Y}'_+} \inp{\boldsymbol{\ell}}{\mathbf{b}}
    & &\A'(\boldsymbol{\ell}) - \mathbf{c} \in \mathcal{X}_+. \label{eq:peak_rat_con_std}
\end{align}

\subsection{Strong Duality}

Strong duality under A1-A5 is attained if the following three conditions are fulfilled (sufficient condition):
\begin{itemize}
    \item[R1] All measures in $\bbmu$ that are feasible for \eqref{eq:peak_rat_meas} are bounded.
    \item[R2] There exists a feasible measure variable $\bbmu$ solving \eqref{eq:peak_rat_meas}.    
    \item[R3] The elements of $(\A, \mathbf{b}, \mathbf{c})$ are continuous in $(t, x)$.
\end{itemize}

Requirement R1 will be proven by noting that all measures in $\bbmu$ are supported on compact sets (A1), and that all feasible measures in $\bbmu$ have finite mass. 
Requirement R2 is satisfied by choosing a feasible trajectory parameterized by $(t^*, x_0^*)$ and using the construction process from the proof of Theorem \ref{eq:no_relaxation_upper_bound} to create measures $(\mu_0, \mu_p, \mu)$. The measures $\{\nu_\ell\}$ may be uniquely defined from $\mu$ by constraint \eqref{eq:peak_rat_meas_abscont} as:
\begin{align}
    & \forall\omega \in C([0, T] \times X): \nonumber\\
    & \ \quad \inp{\omega}{\nu_\ell} = \int_{0}^{t^*} \frac{\omega(t, x(t \mid x_0^*))}{ D_\ell (t, x(t \mid x_0^*))} dt.
\end{align}

Lastly, we examine requirement R3. The vector $\mathbf{c}$ is continuous in $(t, x)$, because $p$ is continuous by A3. The vector $\mathbf{b}$ is continuous in $(t, x)$ because it is a constant. The polynomial structure of $(f_0, N_\ell, D_\ell)$ ensures that $\forall \bell \in \mathcal{Y}_+:$ the vector $\A^\dagger(\bell)$ is continuous in $(t, x)$ (because the product of continuous functions is continuous).

Strong duality of $p^*_r = d^*_r$ is therefore proven.

\subsection{Duality without polynomial structure}

The weak dual of \eqref{eq:peak_meas} is \cite{cho2002linear}:
% \begin{prob}
%     Find a scalar $\gamma$ and a $C^1$ auxiliary function $v$ to infimize
% \label{prob:peak_cont}
\begin{subequations}
\label{eq:peak_cont}
    \begin{align}
      d^* = &\ \inf_{\gamma \in \R} \gamma \\      
      & \gamma \geq v(0, x) & & \forall x \in X_0\\      
      & v(t, x) \geq p(x)  & & \forall (t, x) \in [0, T] \times X \\
      & \Lie_{f} v(t, x) \leq 0 \quad & & \forall (t, x) \in [0, T] \times X \label{eq:peak_cont_lie}\\      
      & v \in C^1([0, T] \times X).
    \end{align}
\end{subequations}
% \end{prob}
If $(f_0, N_\ell, D_\ell)$ are allowed to be discontinuous, then the nonsmooth nature of $f$ in \eqref{eq:rational_dynamics} violates the assumption of $(t, x)$-continuity in requirement R3. As a result, program \eqref{eq:peak_cont} would be a weak dual to Problem \ref{prob:peak_meas}, but is not necessarily guaranteed to be a strong dual.
\section{Other Rational-Peak SOS Methods}

\label{app:other_methods}

This appendix presents \ac{SOS} programs for peak estimation of rational functions based on clearing to common denominators \cite{parker2021study} and by introducing new lifting variables and equality constraints \cite{magron2018semidefinite, newton2023rational}.

\subsection{Clearing to Common Denominators}

Assumptions A1-A7 will be in effect for this subsection.

Let us define the polynomials $\Phi(t, x) = \prod_{\ell=1}^L D_\ell(t, x)$ and $\Phi_\ell(t, x) = \prod_{\ell\neq \ell'} D_{\ell'}(t, x)$.

The method of clearing to common denominators replaces the term
\begin{align}
    \Lie_f v(t, x) \leq 0 & & \forall (t, x) \in [0, T] \times X \\
    \intertext{from \eqref{eq:peak_cont_lie} with the equivalent (under A5) expression of}
    \Phi(t, x) \Lie_f v(t, x) \leq 0 & & \forall (t, x) \in [0, T] \times X \label{eq:clear}.
\end{align}

The resultant degree-$k$ \ac{SOS} truncation of \eqref{eq:peak_cont} for peak estimation under the clearing term \eqref{eq:clear} is
\begin{subequations}
\label{eq:peak_rat_sos_clear}
    \begin{align}
      d^*_{c, k} = &\ \inf_{\gamma \in \R, v} \gamma \\      
      & \gamma - v(0, x) \in \Sigma[X_0]_{\leq 2k}\\
      & \quad v(t, x) - p(x) \in \Sigma[([0, T] \times X)]_{\leq 2k}\\
      &-\Phi(t, x) \Lie_{f_0} v(t, x) \textstyle -\sum_{\ell=1}^L \Phi_\ell(t, x) (N_\ell \cdot \nabla_x v(t, x)) \label{eq:peak_rat_sos_clear_big} \\
      & \ \qquad \in \Sigma[([0, T] \times X)]_{\leq 2k + 2 \max_\ell \floor{(\deg \Phi N_\ell-1) /2}} \nonumber        \\
      & v \in \R[t, x]_{\leq 2k}.      
    \end{align}
\end{subequations}

The degree of the constraint \eqref{eq:peak_rat_sos_clear_big} will rapidly rise as the number of terms $L$ increases, due to the complexity of forming $\Phi(t, x)$, yielding a maximal-size PSD constraint of size $\binom{1+n+k+\floor{(\deg \Phi-1)/2}}{1+n}$.

\subsection{Lifting Variables}

This subsection will require assumptions A1-A4 and A6 (but not necessarily A5). The work of \cite{magron2018semidefinite} associates each denominator $D_\ell(t, x)$ to a new variable $y_\ell$ in the fashion of an index-1 differential-algebraic equation   \cite{rabier2002theoretical}. The support set of the lifted variable is:
\begin{align}
    \Omega &= \{(t, x, y) \in [0, T] \times X \times \R^L \mid \label{eq:lifting}\\ &\qquad \qquad  \forall \ell=1\ldots L: \ y_\ell D_\ell(t, x) = 1\}\nonumber.
\end{align}
The set $\Omega$ will be compact under assumptions A2 and A5.
The Lie derivative expression for the lifted dynamics is
\begin{align}
    \textstyle \Lie_f v(t, x) = \Lie_{f_0} v(t, x) + \sum_{\ell=1}^L y_\ell (N_\ell \cdot  \nabla_x v(t, x)).
\end{align}
Define $Y$ as the following degree:
\begin{align}
    Y = \max(\deg f_0 - 1, \max_\ell \deg N_\ell).
\end{align}

The lifted-form degree-$k$ \ac{SOS} program for rational peak estimation is
\begin{subequations}
\label{eq:peak_rat_sos_lift}
    \begin{align}
      d^*_{y, k} = &\ \inf_{\gamma \in \R, v} \gamma \\      
      & \gamma - v(0, x) \in \Sigma[X_0]_{\leq 2k}\\
      & \quad v(t, x) - p(x) \in \Sigma[([0, T] \times X)]_{\leq 2k}\\
      &-\Lie_{f_0} v(t, x) \textstyle -\sum_{\ell=1}^L y_\ell (N_\ell \cdot \nabla_x v(t, x)) \label{eq:peak_rat_sos_lift_lie} \\
      & \ \qquad \in \Sigma[\Omega]_{\leq 2k + 2 \floor{Y/2}} \nonumber        \\
      & v \in \R[t, x]_{\leq 2k}.      
    \end{align}
\end{subequations}

The Lie derivative constraint in \eqref{eq:peak_rat_sos_lift_lie} involves $1+n+L$ variables. This results in a maximal size PSD constraint of $\binom{1+n+L+k+\floor{Y/2}}{k+\floor{Y/2}}$, which can be optionally reduced to $\binom{2+n+k+\floor{Y/2}}{k+\floor{Y/2}}$ under quotient and correlative sparsity methods at the cost of increased finite-degree conservatism.

% The degree-$k$ \ac{SOS} truncation of \eqref{eq:peak_cont} with a lifting constraint in dynamics 

% %%%%%%%%%%%%%%%%%%%%%%%%%%%%%%%%%%%%%%%%%%%%%%%%%%%%%%%%%%%%%%%%%%%%%%%%%%%%%%%%
% \section{Acknowledgements}

% The authors thank Milan Korda for his discussions about occupation measures and time-varying uncertainty.

\bibliographystyle{ieeetr}
\bibliography{references.bib}

\end{document}